\DeclareMathOperator{\sign}{sign}
\newtheorem{theorem}{Theorem}
\newtheorem{lemma}{Lemma}
\newtheorem{proposition}{Proposition}
\newtheorem{corollary}{Corollary}
\theoremstyle{definition}
\newtheorem{definition}{Definition}
\theoremstyle{remark}
\newtheorem*{remark*}{Remark}
\newcommand{\AR}[3]{\operatorname{ar}(#1,#2,#3)}
\newcommand{\axisDir}{\vv{a}}
\begin{document}

\begin{frontmatter}
\journal{}

\title{Exparabolas of a Triangle}
\author{Martin Lukarevski}
\ead{martin.lukarevski@ugd.edu.mk}
\address{Goce Delčev University of Štip, Department of Mathematics and Statistics, Štip, North Macedonia}

\author{Hans-Peter Schröcker}
\ead{hans-peter.schroecker@uibk.ac.at}
\address{University of Innsbruck, Department of Basic Sciences in Engineering Sciences, Innsbruck, Austria}

\begin{abstract}
  Among a triangle's exparabolas (parabolas escribed to the triangle), three are distinguished by having locally maximal parameter. They are determined by a simple cubic equation and characterized by having axes that contain the triangle's centroid. More generally, there are three (not necessarily real) exparabolas with axes through a given point $X$. Their focal points determine another triangle which we call the $X$-focal triangle. It shares the circumcircle with the original triangle and its orthocenter is $X$. The sequence of iterated focal triangles with respect to the centroids splits into an even and an odd sub-sequence that both converge to equilateral triangles.
\end{abstract}

\begin{keyword}
  triangle geometry \sep
  barycentric coordinates \sep
  parameter \sep
  maximal parabola \sep
  focal triangle \sep
  orthocenter
  \MSC[2020]{51M04 51N20 }
\end{keyword}

\end{frontmatter}

\section{Introduction}
\label{sec:introduction}

In the article \cite{lukarevski25} we studied maximal parabolas in Euclidean geometry and minimal horocycles in hyperbolic geometry. As a technical tool we introduced parabolas tangent to the sides of a triangle (``exparabolas'' of the triangle) and exparabolas that are of minimal parameter (``max-exparabolas''). They turned out to come in triples, each opposite to one triangle vertex, meaning that they are tangent to the opposite triangle side in an interior point.

In this article we study exparabolas within triangle geometry and focus in particular on exparabolas of maximal parameter and triples of exparabolas with concurrent axes. In re-proving the existence and uniqueness result of \cite{lukarevski25} in Theorem~\ref{th:max-exparabolas} we derive simple cubic equations with coefficients depending on the triangle sides only, whose roots describe the three max-exparabolas. We characterize max-exparabolas among all exparabolas as having axes incident with the triangle's centroid in Corollary~\ref{cor:centroid}. This follows immediately from a more general statement on exparabolas with axes through a given point $X$ (Theorem~\ref{th:X-exparabola}).

The focal triangle with respect to a point $X$ is formed by the focal points of three exparabolas with axes concurrent at $X$. The three axes are its altitudes (Theorem~\ref{th:focal-orthocenter}) so that $X$ is the orthocenter of the $X$-focal triangle. In case of $X$ being the centroid, we recover the max-exparabolas. We use this property to investigate the limiting behavior of the sequence of iterated focal triangles with respect to the centroid. Its even and odd subsequence, respectively, converge to equilateral triangles. The six vertices of these two triangles form a regular hexagon (Corollary~\ref{cor:focal-sequence}).

Throughout this text we will use some formulas to compute metric data (vertices, axis directions, focal points, parameters) of parabolas given in parametric form as Bézier curves of degree two. These formulas are sometimes a bit long and their derivation is technical so that we collect them in an appendix with the aim of not disturbing too much the flow of reading.

\section{Parabolas Escribed to a Triangle}
\label{sec:escribed-parabolas}

Consider a triangle $ABC$ with vertices $A$, $B$, $C$ and opposite edges $BC$, $CA$, $AB$. The distance between triangle vertices are denoted as usual by $a = \vert BC \vert$, $b = \vert CA \vert$, and $c = \vert AB \vert$.

\begin{definition}[\cite{lukarevski25}]
  We call any parabola $p$ that is tangent to the three sides of the triangle $ABC$ an \emph{exparabola} of $ABC$. If $p$ is tangent to $AB$ in a point between $A$ and $B$ we say that $p$ is \emph{opposite to $C$.}
\end{definition}

The term ``exparabola'' is motivated because it describes a concept that is reminiscent to the excircles of a triangle. Note however, that there are infinitely many exparabolas to a triangle while there are only three excircles.

\begin{remark*}
The set of exparabolas concists precisely of the regular conics in the dual pencil of conics that are tangent to the three sides of the triangle $ABC$ and to the line at infinity. The singular conics in this pencil divide this dual pencil of exparabolas into three subsets, each consisting of all exparabolas opposite to one vertex.
\end{remark*}

An exparabola $p$ is uniquely determined by its point of tangency (different from the triangle vertices) on any of the triangle's sides. This means that there is a dependence between the three points of tangency. It is described in the following lemma in terms of homogeneous barycentric coordinates \cite[Section~9.1]{UniverseConics}.

\begin{lemma}
  \label{lem:barycentric-coordinates}
  For any exparabola there exists $t \in \mathbb{R} \setminus \{0,1\}$ such that its points of tangency with the triangle sides have homogeneous barycentric coordinates
  \begin{equation}
    \label{eq:1}
    A_0 \coloneqq (0,1,t-1),\quad
    B_2 \coloneqq (1,0,-t),\quad
    C_1 \coloneqq (1-t,t,0),
  \end{equation}
  cf. Figure~\ref{fig:escribed-parabola}. (The reason for the seemingly strange labeling of points will become clear later, when we consider three point triples of above type.)
\end{lemma}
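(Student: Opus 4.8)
The plan is to exploit the standard normal form of a conic inscribed in the reference triangle and then single out the parabolas by tangency to the line at infinity. Recall that in homogeneous barycentric coordinates with $A=(1:0:0)$, $B=(0:1:0)$, $C=(0:0:1)$ the sidelines $BC$, $CA$, $AB$ are $x=0$, $y=0$, $z=0$, while the line at infinity is $x+y+z=0$. Every conic tangent to the three sidelines can be written, for suitable homogeneous parameters $(\alpha:\beta:\gamma)$, as
\[
  \alpha^2 x^2 + \beta^2 y^2 + \gamma^2 z^2 - 2\beta\gamma\, yz - 2\gamma\alpha\, zx - 2\alpha\beta\, xy = 0
\]
(cf. \cite[Section~9.1]{UniverseConics}). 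Setting $x=0$ collapses the equation to $(\beta y-\gamma z)^2=0$, so the contact point on $BC$ is $(0:\gamma:\beta)$; likewise the contact points on $CA$ and $AB$ are $(\gamma:0:\alpha)$ and $(\beta:\alpha:0)$.

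First I would characterize the parabolas inside this two-parameter family. An inscribed conic is a parabola precisely when it is tangent to the line at infinity. Substituting $z=-x-y$ into the equation above produces a binary quadratic form in $(x:y)$, and the tangency (vanishing discriminant) condition factors, after a short computation, as $-4\alpha\beta\gamma(\alpha+\beta+\gamma)=0$. A nondegenerate inscribed conic has $\alpha\beta\gamma\neq 0$ (a vanishing parameter would force a contact point onto a vertex and split the conic into lines), so the exparabolas are exactly those with $\alpha+\beta+\gamma=0$.

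Then I would normalize. Since $\gamma\neq 0$, I may scale so that $\gamma=-1$; the parabola condition becomes $\alpha+\beta=1$, and writing $\alpha=t$, $\beta=1-t$ gives $(\alpha:\beta:\gamma)=(t:1-t:-1)$. Reading off the three contact points from the formulas above and clearing a factor $-1$ where convenient yields
\[
  (0:\gamma:\beta)=(0:1:t-1),\quad (\gamma:0:\alpha)=(1:0:-t),\quad (\beta:\alpha:0)=(1-t:t:0),
\]
which are exactly $A_0$, $B_2$, $C_1$; conversely, each such $t$ comes from a genuine parabola. Finally, nondegeneracy ($\alpha\neq 0$ and $\beta\neq 0$) translates into $t\neq 0$ and $t\neq 1$, giving the asserted range $t\in\mathbb{R}\setminus\{0,1\}$.

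I expect the only real work to be the discriminant computation in the second step; its clean factorization $-4\alpha\beta\gamma(\alpha+\beta+\gamma)$ is what makes the whole argument effortless, so carefully verifying that factorization (and confirming that the spurious factor $\alpha\beta\gamma$ is precisely the degenerate locus) is the point that deserves the most attention. The passage from the parameters $(\alpha:\beta:\gamma)$ to the single parameter $t$ is then a matter of normalization and bookkeeping.
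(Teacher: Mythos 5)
Your argument is correct, but it takes a genuinely different route from the paper. The paper's proof is affine and synthetic-parametric: it observes that the three affine ratios $\AR{B_2}{C}{A}$, $\AR{A}{B}{C_1}$, $\AR{C}{A_0}{B}$ all equal $t/(1-t)$, which is the classical characterization of the tangent lines of a parabola, and identifies $A$, $B$, $C_1$ as the intermediate points of de Casteljau's algorithm applied to the Bézier curve with control points $B_2$, $C$, $A_0$ --- exactly the parametrization \eqref{eq:2} used throughout the rest of the paper. You instead work projectively with the normal form $\alpha^2x^2+\beta^2y^2+\gamma^2z^2-2\beta\gamma yz-2\gamma\alpha zx-2\alpha\beta xy=0$ of an inscribed conic, read off the contact points $(0:\gamma:\beta)$, $(\gamma:0:\alpha)$, $(\beta:\alpha:0)$, and impose tangency to the line at infinity. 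Your discriminant factorization is correct up to an irrelevant constant (with the convention $B^2-4AC$ one gets $-16\alpha\beta\gamma(\alpha+\beta+\gamma)$ rather than $-4\alpha\beta\gamma(\alpha+\beta+\gamma)$), the identification of $\alpha\beta\gamma=0$ with the degenerate (double-line) locus is right, and the normalization $\gamma=-1$, $\alpha=t$, $\beta=1-t$ reproduces \eqref{eq:1} with the exclusions $t\neq 0,1$. What your approach buys is the implicit equation of the exparabola and a transparent view of the full two-parameter family of inscribed conics, within which the parabolas form the linear slice $\alpha+\beta+\gamma=0$; what the paper's approach buys is brevity and a direct handoff to the Bézier control-point representation on which all subsequent metric computations (vertex, axis, focus, parameter) in the appendix depend.
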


The statement of Lemma~\ref{lem:barycentric-coordinates} follows from well-known properties of parabolas if we observe that
\[\AR{B_2}{C}{A} = \AR{A}{B}{C_1} = \AR{C}{A_0}{B} = \frac{t}{1-t}\]
where
\[
  \AR{X}{Y}{Z} \coloneqq \frac{\vv{XZ}}{\vv{ZY}}
\]
is the \emph{affine ratio} of three collinear points $X$, $Y$, and $Z$ and the arrows denote signed distances. In fact, we may view the points $A$, $B$, and $C_1$ as intermediate points constructed by de Casteljau's algorithm when applied to the Bézier curve with control points $B_2$, $C$, and~$A_0$ \cite[Section~3.1]{farin97}.

\begin{figure}
  \centering
  \includegraphics[]{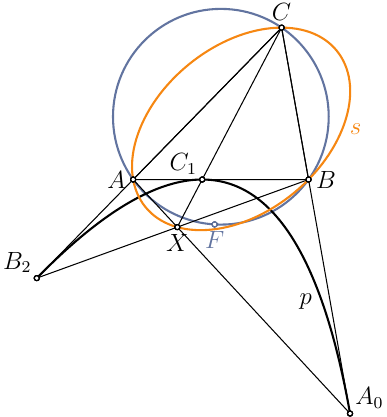}
  \caption{Parabola escribed to triangle $ABC$, Steiner circumellipse $s$, and focal point $F$ on the circumcircle.}
  \label{fig:escribed-parabola}
\end{figure}

For later reference, we state some basic properties of exparabolas:

\begin{lemma}
  \label{lem:exparabola-basic}
  If $p$ is an exparabola of the triangle $ABC$ with points of tangency given by
  \eqref{eq:1}, then:
  \begin{itemize}
  \item The lines $AA_0$, $BB_2$, and $CC_1$ intersect in a point $X$ on the Steiner circumellipse~$s$.
  \item The focal point $F$ of $p$ lies on the circumcircle of~$ABC$.
  \end{itemize}
\end{lemma}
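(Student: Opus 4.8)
The plan is to place everything in homogeneous barycentric coordinates with respect to $ABC$, so that $A=(1,0,0)$, $B=(0,1,0)$, $C=(0,0,1)$, and to treat the two bullets separately.

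For the first bullet I would establish concurrency and incidence by direct computation. Concurrency of $AA_0$, $BB_2$, $CC_1$ follows from Ceva's theorem: reading the directed side ratios off \eqref{eq:1} gives $BA_0:A_0C=(t-1):1$, $CB_2:B_2A=1:(-t)$, and $AC_1:C_1B=t:(1-t)$, whose product telescopes to $1$. Intersecting any two of the cevians—for instance $AA_0$, which has line equation $z=(t-1)y$, with $BB_2$, which has line equation $z=-tx$—yields the common point
\begin{equation*}
  X = \bigl(1-t,\; t,\; t(t-1)\bigr).
\end{equation*}
It then remains to verify $X\in s$. The Steiner circumellipse has barycentric equation $xy+yz+zx=0$, and substituting the coordinates of $X$ the left-hand side factors as $t(t-1)\,[\,t-(t-1)-1\,]$, which vanishes identically, so $X$ lies on $s$.

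For the second bullet I would argue synthetically via the focal reflection property of the parabola: the reflection of the focus $F$ in \emph{any} tangent line lands on the directrix (indeed on the foot of the perpendicular from the point of tangency to the directrix). Since the three sides of $ABC$ are all tangent to $p$, the three reflections of $F$ in the sidelines $BC$, $CA$, and $AB$ all lie on the single directrix of $p$ and are therefore collinear. By the converse of the Steiner line theorem—the reflections of a point in the three sidelines of a triangle are collinear if and only if the point lies on the circumcircle, with the line of reflections being the Steiner line through the orthocenter—it follows that $F$ lies on the circumcircle of $ABC$. This already foreshadows the orthocenter statement of Theorem~\ref{th:focal-orthocenter}.

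The first bullet is routine barycentric algebra and poses no real difficulty. The genuine content lies in the second bullet, and the main obstacle there is to have the converse direction of the Steiner line theorem cleanly at hand (equivalently, the Simson/Wallace characterization, to which the reflections are related by the homothety of ratio $2$ centered at $F$) and to justify that the three reflections are genuinely collinear. Should one prefer a purely computational route, the appendix's formula for the focal point of the Bézier parabola with control points $B_2$, $C$, $A_0$ could instead be substituted into the circumcircle equation; the obstacle then shifts to managing the sizeable algebraic simplification, which the reflection argument sidesteps entirely.
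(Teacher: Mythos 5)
Your proof is correct. For the first bullet you follow essentially the same route as the paper: Ceva's theorem via the directed side ratios read off \eqref{eq:1}, explicit coordinates for the common point (your $(1-t,\,t,\,t(t-1))$ agrees with the paper's $(t-1,\,-t,\,t(1-t))$ up to an overall sign), and direct substitution into $xy+yz+zx=0$. For the second bullet the paper does not give an argument at all --- it declares the claim well known and cites \cite{Dragovic25} and \cite[Theorem~9.1.2]{UniverseConics} --- whereas you supply a genuine synthetic proof: the reflections of the focus in the three tangent sidelines all land on the directrix, hence are collinear, and the converse of the Steiner line theorem then places $F$ on the circumcircle. That argument is sound (the collinearity is automatic since all three reflections lie on the single line that is the directrix, and the homothety of ratio $\tfrac12$ centered at $F$ reduces the Steiner line statement to the Simson--Wallace characterization, whose converse is standard). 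Your version buys self-containedness at the cost of invoking the converse Simson/Steiner theorem as an external classical fact; the paper buys brevity by outsourcing the whole claim to the literature.
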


\begin{proof}
  The first part of the first statement follows from Ceva's Theorem because the product of the affine ratios
  \[
    \AR{A}{B}{C_1} = \frac{t}{1-t},\quad
    \AR{B}{C}{A_0} = t - 1,\quad
    \AR{C}{A}{B_2} = -\frac{1}{t}
  \]
  equals $1$. The barycentric coordinate vector of the intersection point are $X = (t-1, -t, t(1-t))$. It satisfies the equation $xy + yz + zx = 0$ of the Steiner circumellipse \cite{kimberling20}. The second claim is well known, cf. for example \cite{Dragovic25} for a computational proof or \cite[Theorem~9.1.2]{UniverseConics}.
\end{proof}

\begin{remark*}
  The point $X$ in Lemma~\ref{lem:exparabola-basic} is the Brianchon point in the three-tangents degeneration case of Brianchon's Theorem \cite[Theorem 6.2.3 and Figure~6.7]{UniverseConics}.
\end{remark*}

\section{Exparabolas of Maximal Parameter}
\label{sec:maximal-parameter}

In the one-parametric family of exparabolas of triangle $ABC$, given by their points of tangency \eqref{eq:1} for varying $t$, we want to single out particular ones.

\begin{definition}
  \label{def:exparabola}
  An exparabola of the triangle $ABC$ which locally has a maximal parameter is
  called a \emph{max-exparabola} of~$ABC$.
\end{definition}

Recall that parameter of a conic equals the product of numerical eccentricity times the distance between focal point and directrix in the Apollonian definition of conics \cite[Definition~2.1.1]{UniverseConics}. For our purposes it is more convenient to regard it as the curvature radius in the parabola vertex \cite[Theorem~2.1.5]{UniverseConics}. In \cite{lukarevski25} it served as a measure for the size of a parabola. Hence, max-exparabolas may also be addressed as locally maximal escribed parabolas of $ABC$.

\begin{theorem}
  \label{th:max-exparabolas}
  A triangle $ABC$ has three max-exparabolas, one opposite to each of the three
  triangle vertices (Figure~\ref{fig:max-exparabolas}).
\end{theorem}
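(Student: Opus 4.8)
The plan is to reduce the theorem to the study of a single real function $P(t)$, the parameter of the exparabola with tangency data \eqref{eq:1}, and to locate its local maxima. First I would record which vertex each exparabola is opposite to by reading off the signs of the barycentric coordinates in \eqref{eq:1}: the tangency point $C_1=(1-t,t,0)$ lies between $A$ and $B$ exactly when $0<t<1$, $A_0=(0,1,t-1)$ lies between $B$ and $C$ exactly when $t>1$, and $B_2=(1,0,-t)$ lies between $C$ and $A$ exactly when $t<0$. Thus the parameter range splits into the three open intervals $(-\infty,0)$, $(0,1)$, $(1,\infty)$ (opposite to $B$, $C$, $A$), while $t\in\{0,1,\infty\}$ yield degenerate conics. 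Proving the theorem amounts to showing that $P$ has exactly one local maximum on each interval.

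Next I would compute $P(t)$ from the quadratic Bézier representation of $p$ with control points $B_2,C,A_0$, using the metric formulas of the appendix (the parameter is the curvature radius at the vertex, and for a degree-two Bézier curve the vertex is the point of minimal speed). Passing to Cartesian coordinates, the control-point differences collapse to $C-B_2=\tfrac{C-A}{1-t}$ and $A_0-C=\tfrac{B-C}{t}$, so their scalar cross product is $-2S/\bigl(t(1-t)\bigr)$, where $S$ is the area of $ABC$, and their difference is $\tfrac{1}{t(1-t)}\bigl(t(A-B)+(B-C)\bigr)$. Substituting into the appendix formula yields the compact expression
\[
  P(t)=\frac{8S^{2}\,\lvert t(1-t)\rvert}{Q(t)^{3/2}},\qquad
  Q(t)\coloneqq c^{2}t^{2}-(a^{2}+c^{2}-b^{2})\,t+a^{2}=\bigl\lvert t(A-B)+(B-C)\bigr\rvert^{2}.
\]
The discriminant of $Q$ is $-4a^{2}c^{2}\sin^{2}B<0$, so $Q>0$ throughout; hence $P$ is smooth on each interval, and $P\to 0$ as $t\to 0,1,\pm\infty$, the three degenerate limits.

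The existence of a max-exparabola opposite each vertex is then immediate: on each of the three intervals $P$ is smooth, strictly positive in the interior, and tends to $0$ at both ends, so it attains an interior maximum, which is a critical point. For the exact count I would set $P'(t)=0$. Since the sign of $t(1-t)$ is constant on each interval, the critical points coincide with those of $t(1-t)\,Q(t)^{-3/2}$; clearing $Q^{5/2}$ turns $P'=0$ into $2(1-2t)\,Q(t)-3t(1-t)\,Q'(t)=0$, which expands to the cubic
\[
  2c^{2}t^{3}+(a^{2}-b^{2}-3c^{2})\,t^{2}+(c^{2}-b^{2}-3a^{2})\,t+2a^{2}=0 .
\]
A cubic has at most three real roots, while the previous paragraph produced at least one critical point in each of the three disjoint intervals. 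These two facts force exactly one critical point per interval, and that point is necessarily the maximizer there. This yields precisely three max-exparabolas, one opposite each vertex, together with the announced defining cubic.

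I expect the main obstacle to be the clean reduction itself: extracting the compact form of $P(t)$ from the appendix formula and verifying that $P'=0$ genuinely collapses to a cubic (and not to a higher-degree equation obscured by the $Q^{3/2}$ factor). Once that is in hand, the strict positivity of $Q$ and the vanishing of $P$ at the three degenerate parameters are exactly the facts that make the counting argument airtight.
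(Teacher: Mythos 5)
Your proposal is correct and follows essentially the same route as the paper: both parametrize the exparabola as a quadratic Bézier curve, compute the parameter from the appendix formulas, reduce the stationarity condition to the very same cubic (yours is exactly $e_c=2e$ from \eqref{eq:4}--\eqref{eq:5}), and place one root in each of the intervals $(-\infty,0)$, $(0,1)$, $(1,\infty)$. The only difference is organizational --- the paper finds the three roots by sign changes of the cubic itself via the intermediate value theorem, whereas you first obtain an interior maximum on each interval from the decay of $P$ at $t\to 0,1,\pm\infty$ and then cap the count by the cubic's degree --- but the substance is identical.
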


\begin{figure}
  \centering
  \includegraphics[]{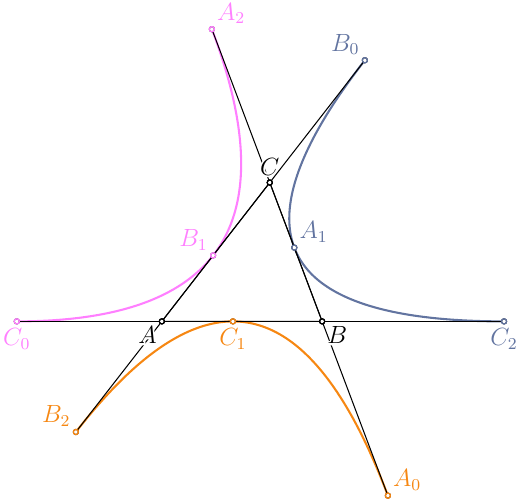}
  \caption{Exparabolas of triangle $ABC$.}
  \label{fig:max-exparabolas}
\end{figure}

\begin{proof}
  We parametrize an exparabola $p$ as quadratic Bézier curve
  \begin{equation}
    \label{eq:2}
    p\colon (1-u)^2 B_2 + 2(1-u)u C + u^2 A_0,\quad
    u \in \mathbb{R}
  \end{equation}
  (cf. \cite[Section~4.1]{farin97}) where the points $B_2$, $C$, and $A_0$ are as in \eqref{eq:1}. In addition to the triangle vertices, they depend on a parameter $t$ that is to be determined. The squared parameter $\varrho^2$ can be computed by Equation~\eqref{eq:parameter2} in the appendix. With $A = (0,0)$, $B = (c, 0)$, and $C = (c_1,c_2)$ we find
  \[
    \varrho^2 = \frac{4c^4c_2^4t^2(t-1)^2}
    {(c^2t^2+2c(c_1-c)t+c^2-2cc_1+c_1^2+c_2^2)^3}.
  \]
  In order to determine the stationary values of $\varrho^2$, we solve \[ \od{\varrho^2}{t} = 0 \] for $t$. The numerator of the right-hand side factors as $-8c^4c_2^4t(t-1)e$. Since $c$, $c_2$, $t$ and $t-1$ where
  \[
    e = c^2t^3-c(c+c_1)t^2+(-c^2+3cc_1-2c_1^2-2c_2^2)t+c^2-2cc_1+c_1^2+c_2^2.
  \]
  Since $c$, $c_2$, $t$, and $t-1$ are all different from zero, stationary values of the parameter are the roots of the cubic equation $e = 0$. Thus, there are \emph{at most} three exparabolas with stationary parameter.

  The cubic $e$ will be much simpler, closer to standard notation of triangle geometry, and, most importantly, independent of the particular coordinates we chose for $A$, $B$, and $C$, if we express it in terms of the triangle side lengths $a$, $b$, and $c$. For that purpose, we denote the angle of the triangle at $A$ by $\alpha$ and, using the law of cosines, substitute
  \begin{equation}
    \label{eq:3}
    c_1 = b\cos\alpha,\quad
    c_2 = b\sqrt{1-\cos^2\alpha}
  \end{equation}
  where \[\cos\alpha = \frac{b^2+c^2-a^2}{2bc}\] and simplify to obtain
  \begin{equation}
    \label{eq:4}
    e = c^2t^3+\tfrac{1}{2}(a^2-b^2-3c^2)t^2-\tfrac{1}{2}(3a^2+b^2-c^2)t+a^2.
  \end{equation}
  Observe that
  \begin{equation*}
    \lim_{t\to-\infty}e(t) = -\infty,\quad
    e(0) = a^2 > 0,\quad
    e(1) = -b^2 < 0,\quad
    \lim_{t\to\infty}e(t) = \infty.
  \end{equation*}
  and apply the intermediate value theorem to see that there are \emph{three real roots} $t_0$, $t_1$, $t_2$ that interlace with $0$ and $1$ as \(t_0 < 0 < t_1 < 1 < t_2\). From this we infer that there are \emph{precisely} three exparabolas of stationary parameter, one opposite to each vertex. Finally, the infimum $0$ of the parameter in the family of exparabolas is approached for $t \to 0$, $t \to 1$, or $t \to \infty$. Hence, the parameter is not just stationary but locally maximal.
\end{proof}

Theorem~\ref{th:max-exparabolas} has already been proved in \cite{lukarevski25} but with slightly different computations. The simple cubic \eqref{eq:4} is new.

In order to introduce more symmetry and structure into our notation, we denote, for $i \in \{0,1,2\}$ the point of tangency corresponding to the root $t_i$ via \eqref{eq:1} by $C_i$. Each parameter value $t_i$ (equivalently, each point of tangency $C_i$) corresponds to one max-exparabola which we denote by $p_0$, $p_1$, and $p_2$, according to the index of the root $t_i$ or by~$p_B$, $p_C$, and $p_A$, according to the triangle vertex opposite to it.

The points of tangency of the max-exparabolas on the remaining triangle sides can be found from the two cubics that are obtained by cyclically permuting the triple $(a,b,c)$ in \eqref{eq:4}. Setting $e_c \coloneqq 2e$, the three cubics are
\begin{equation}
  \label{eq:5}
  \begin{aligned}
    e_a &= 2a^2t^3 + (b^2-c^2-3a^2)t^2 - (3b^2+c^2-a^2)t + 2b^2,\\
    e_b &= 2b^2t^3 + (c^2-a^2-3b^2)t^2 - (3c^2+a^2-b^2)t + 2c^2,\\
    e_c &= 2c^2t^3 + (a^2-b^2-3c^2)t^2 - (3a^2+b^2-c^2)t + 2a^2.
  \end{aligned}
\end{equation}
They give rise to points of tangency $A_0$, $A_1$, $A_2$ on $BC$, and $B_0$, $B_1$, $B_2$ on $CA$, respectively.

Of course, the roots of $e_a$, $e_b$, and $e_c$ are closely related as the points of tangency $A_0$, $A_1$, $A_2$ on $BC$ and $B_0$, $B_1$, $B_2$ on $CA$ are already determined by $C_0$, $C_1$, $C_2$, that is, by the roots of $e_c$. The homogeneous barycentric coordinates of all these points in terms of the roots $t_0$, $t_1$, $t_2$ of $e_c$ are as follows:
\begin{alignat}{3}
  A_0 &= (0, 1, t_1-1),\quad&
  A_1 &= (0, 1, t_2-1),\quad&
  A_2 &= (0, 1, t_0-1), \notag\\
  \label{eq:6}
  B_0 &= (1, 0, -t_2),\quad&
  B_1 &= (1, 0, -t_0),\quad&
  B_2 &= (1, 0, -t_1),\\
  C_0 &= (1-t_0,t_0,0),\quad&
  C_1 &= (1-t_1,t_1,0),\quad&
  C_2 &= (1-t_2,t_2,0). \notag
\end{alignat}

Before we continue to explore Euclidean properties of exparabolas in general and max-exparabolas in particular in Section~\ref{sec:exparabola-axes}, let is state the obvious affine consequences of Lemma~\ref{lem:exparabola-basic}.

\begin{proposition}
  For a triangle $ABC$ with exparabolas $p_A$, $p_B$, $p_C$ opposite to $A$, $B$, and $C$, respectively, denote their points of tangency with the triangle sides as in Figure~\ref{fig:exparabola-triangle}. By Lemma~\ref{lem:exparabola-basic},
  they give rise to three points
  \begin{equation*}
    X_1 \coloneqq BB_0 \cap AA_1 \cap CC_2,\quad
    X_2 \coloneqq CC_0 \cap BB_1 \cap AA_2,\quad
    X_3 \coloneqq AA_0 \cap CC_1 \cap BB_2
  \end{equation*}
  which form a triangle inscribed into the Steiner circumellipse~$s$.
\end{proposition}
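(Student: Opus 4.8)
The plan is to recognize that the three points $X_1$, $X_2$, $X_3$ are precisely the three concurrence points already furnished by Lemma~\ref{lem:exparabola-basic}, one for each max-exparabola, so that the proposition reduces to that lemma applied three times once the index bookkeeping of \eqref{eq:6} is sorted out. First I would match each cevian triple to the exparabola that produces it. For the exparabola with parameter $t$, the tangency points \eqref{eq:1} lie on $BC$, $CA$, $AB$, and Lemma~\ref{lem:exparabola-basic} asserts that the cevians joining each vertex to the tangency point on its opposite side concur. Reading off the relabeled coordinates in \eqref{eq:6}, the triple belonging to $t = t_0$ is $\{AA_2, BB_1, CC_0\}$, the one belonging to $t = t_1$ is $\{AA_0, BB_2, CC_1\}$, and the one belonging to $t = t_2$ is $\{AA_1, BB_0, CC_2\}$; these are exactly the triples defining $X_2$, $X_3$, and $X_1$, respectively.

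With this identification, the concurrence of each triple and the incidence of the resulting point with the Steiner circumellipse $s$ are immediate from Lemma~\ref{lem:exparabola-basic}. Its proof in fact records the common point of a single cevian triple as $(t-1, -t, t(1-t))$ in homogeneous barycentric coordinates, whence
\[
  X_1 = (t_2-1,\, -t_2,\, t_2(1-t_2)),\quad
  X_2 = (t_0-1,\, -t_0,\, t_0(1-t_0)),\quad
  X_3 = (t_1-1,\, -t_1,\, t_1(1-t_1)),
\]
each satisfying $xy + yz + zx = 0$, the equation of $s$.

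It then remains to confirm that $X_1$, $X_2$, $X_3$ genuinely span a triangle. Here I would invoke the interlacing $t_0 < 0 < t_1 < 1 < t_2$ from the proof of Theorem~\ref{th:max-exparabolas}, which makes the three roots pairwise distinct and keeps them away from $0$ and $1$. Distinctness of the points follows by comparing their first two barycentric coordinates: the $2\times2$ minor $(t_i-1)(-t_j) - (t_j-1)(-t_i) = t_j - t_i$ is nonzero whenever $t_i \neq t_j$. Non-collinearity is then automatic, since a line meets the ellipse $s$ in at most two points and the three distinct points all lie on $s$; hence they form an inscribed triangle. The computation is entirely routine and the statement is essentially a threefold restatement of Lemma~\ref{lem:exparabola-basic}. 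The only delicate point — and the one I would flag as the main obstacle — is the correct pairing of the permuted labels in \eqref{eq:6} with the three exparabolas, since a slip there would misassign which cevians concur at which $X_i$.
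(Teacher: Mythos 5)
Your proof is correct and follows exactly the route the paper intends: the paper offers no separate proof, presenting the proposition as an ``obvious affine consequence'' of Lemma~\ref{lem:exparabola-basic}, which is precisely the threefold application you carry out. Your index matching against \eqref{eq:6} (pairing $t_0, t_1, t_2$ with $X_2, X_3, X_1$) is accurate, and the added checks of distinctness and non-collinearity via the conic are a welcome bit of rigor the paper omits.
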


\begin{figure}
  \centering
  \includegraphics[]{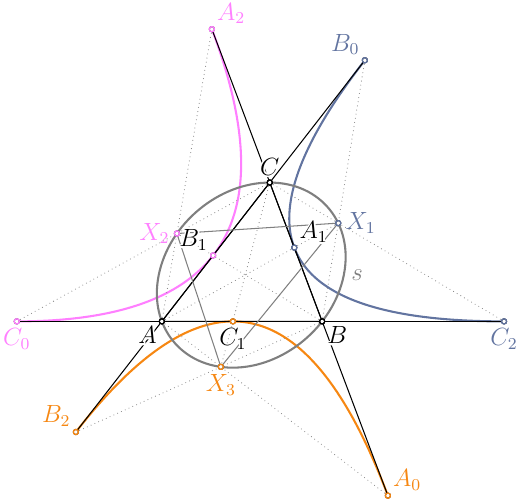}
  \caption{A triangle inscribed into the Steiner circumellipse.}
  \label{fig:exparabola-triangle}
\end{figure}

\section{The Axes of Exparabolas}
\label{sec:exparabola-axes}

In this section we introduce a new concept related to the exparabolas of a
triangle $ABC$:

\begin{theorem}
  \label{th:X-exparabola}
  Given a triangle $ABC$ and a point $X$ with normalized barycentric
  coordinates $(x_0,x_1,x_2)$ that satisfy
  \begin{equation}
    \label{eq:7}
    (x_0+x_1)(x_1+x_2) > 0,\quad
    (x_0+x_2)(x_1+x_2) > 0,\quad
    (x_0+x_1)(x_0+x_2) > 0,
  \end{equation}
  there exist precisely three exparabolas, each opposite to one of the triangle
  vertices, whose axes are incident with~$X$.
\end{theorem}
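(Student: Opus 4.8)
The plan is to follow the strategy of the proof of Theorem~\ref{th:max-exparabolas}: translate the requirement ``the axis of the exparabola passes through $X$'' into a single equation in the parameter $t$ of \eqref{eq:1}, show that this equation is cubic, and then deduce the number of real roots from a sign analysis in which the three inequalities \eqref{eq:7} are exactly the input needed.

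First I would parametrize the exparabola as the quadratic Bézier curve \eqref{eq:2} and describe its axis as a line. The axis direction is the point at infinity of the parabola, $\axisDir \propto B_2 - 2C + A_0$; clearing the normalization denominators $t$ and $1-t$ of $B_2$ and $A_0$ collapses this to the affine expression
\[
  \axisDir(t) \;\propto\; tA + (1-t)B - C,
\]
which is linear in $t$ and keeps the degree of the final equation low. For a base point of the axis I would take the focal point $F(t)$ supplied by the appendix (it lies on the circumcircle by Lemma~\ref{lem:exparabola-basic}); alternatively one may avoid computing $F$ and use that $X$ lies on the axis precisely when the polar of $X$ with respect to the parabola is orthogonal to $\axisDir$, which keeps everything polynomial in the conic coefficients.

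The incidence of $X$ with the axis is then the collinearity condition $\det\bigl(X - F(t),\ \axisDir(t)\bigr) = 0$. This expression is linear in $X$, hence linear in the barycentric coordinates $(x_0,x_1,x_2)$; after clearing the remaining denominators in $t$ it becomes a polynomial $g(t)$ whose coefficients are linear in $x_0, x_1, x_2$. A degree count shows $g$ is cubic with nonvanishing leading coefficient, so there are \emph{at most} three $X$-exparabolas; specializing $X$ to the centroid must return the cubic \eqref{eq:4} up to a nonzero factor, which is a useful consistency check.

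For the count of real roots I would evaluate the leading coefficient together with $g(0)$ and $g(1)$ and inspect the limits $t \to \pm\infty$. Because the coefficients are linear in the $x_i$, I expect each of these three quantities to come out as a fixed triangle factor (matching $c^2$, $a^2$, $-b^2$ in the centroid normalization \eqref{eq:4}) times one of the pair-sums $x_0+x_1$, $x_1+x_2$, $x_0+x_2$. The content of \eqref{eq:7} is exactly that these three pair-sums share a common sign, and since $x_0+x_1+x_2=1$ forces their sum to be positive, they are all positive. This yields the alternation $\sign g(0) = \sign(\text{leading coeff})$ and $\sign g(1) = -\sign(\text{leading coeff})$, so by the intermediate value theorem $g$ has three real roots interlacing as $t_0 < 0 < t_1 < 1 < t_2$. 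Since $t \in (0,1)$ corresponds through $C_1 = (1-t,t,0)$ to a point of tangency on $AB$ between $A$ and $B$, the three roots give \emph{precisely} three exparabolas, one opposite to each vertex. The main obstacle is the bookkeeping that identifies the leading coefficient, $g(0)$, and $g(1)$ with the three pair-sums of \eqref{eq:7} up to manifestly positive factors; carrying out this factorization cleanly, rather than verifying the sign pattern by brute force after the substitution \eqref{eq:3}, is where the real work lies.
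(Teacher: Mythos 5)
Your proposal follows essentially the same route as the paper: reduce the incidence of $X$ with the axis to a determinant condition $\det(\cdot,\axisDir)=0$ that yields a cubic in $t$, then read off three interlacing real roots from the signs of the leading coefficient, $g(0)$, and $g(1)$, which are indeed $c^2(x_0+x_1)$, $a^2(x_1+x_2)$, and $-b^2(x_0+x_2)$ exactly as you predict. The only cosmetic difference is that the paper anchors the axis at the vertex $V$ rather than the focus $F$, and your observation that the normalization $x_0+x_1+x_2=1$ forces all three pair-sums to be positive is a small but valid sharpening of the paper's case analysis.
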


\begin{proof}
  We parametrize an exparabola $p$ as in \eqref{eq:2}, where the points $B_2$
  and $A_0$ depend on a yet undetermined barycentric weight~$t$, that is,
  \[
    A_0 = \frac{1}{t}(B + (t-1)C),\quad
    B_2 = \frac{1}{1-t}(A - tC).
  \]
  With $A = (0, 0)$, $B = (c, 0)$, and $C = (c_1,c_2)$, the vertex of $p$,
  computed via \eqref{eq:vertex_value}, is
  \begin{multline*} 
  V = \frac{ct}{(c^2(1-t)^2-2cc_1(1-t)+c_1^2+c_2^2)^2} \\
  \begin{pmatrix}
  c_1^4+c_2^4
  -3cc_1^3(1-t)
  -c_1^2(3c^2(1-t)^2-2c_2^2)
  -cc_1(1-t)(2c_2^2-(1-t)^2c^2)
  \\
  -cc_2(1-t)(c_1-c(1-t))^2
  \end{pmatrix}.
  \end{multline*} 
  Equation~\ref{eq:axis_direction} yields the axis direction
  \[
    \axisDir = \frac{1}{t(1-t)}
    \begin{pmatrix} (1-t)c-c_1 \\ -c_2 \end{pmatrix}.
  \]
  The point $X$ lies on the parabola axis if and only if $\det(V-X,\axisDir) = 0$. This condition depends on the coordinates of $A$, $B$, $C$ (that is, on $c$, $c_1$, and $c_2$) and the barycentric coordinates $(x_0,x_1,x_2)$ of $X$ with respect to $ABC$. After multiplying away the denominator, dividing off irrelevant factors, and substituting \eqref{eq:3} we obtain the condition $f = 0$ where
  \begin{multline}
    \label{eq:8}
    f = c^2(x_0+x_1)t^3 + ((a^2-b^2)x_2 - (2x_0+x_1)c^2)t^2\\
      + ((c^2-b^2)x_0 - (2x_2+x_1)a^2)t + a^2(x_1+x_2).
  \end{multline}
  It is cubic in $t$ and there exist at most three exparabolas with the required
  properties. Existence of exactly three real solutions follows from
  \begin{equation*}
      \lim_{t\to\pm\infty}f(t) = \pm \sign(x_0+x_1)\infty,\quad
      f(0) = a^2(x_1+x_2),\quad
      f(1) = -b^2(x_0+x_2)
  \end{equation*}
  together with the assumptions \eqref{eq:7} of the theorem.
\end{proof}

\begin{figure}
  \centering
  \includegraphics[]{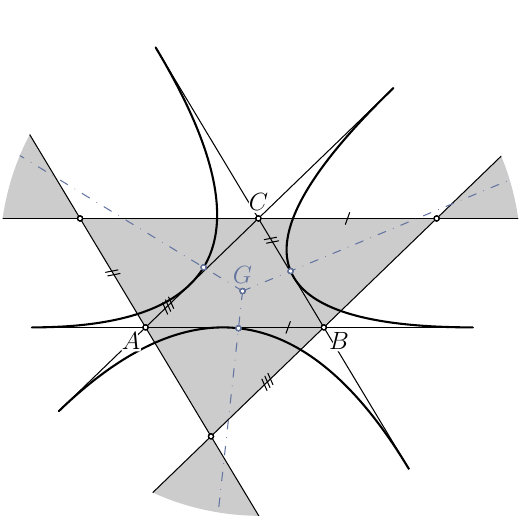}
  \caption{Region for admissible points $X$ in Theorem~\ref{th:X-exparabola}.
    The axes of the max-exparabolas intersect in the triangle's centroid $G$.}
  \label{fig:anticomplementary}
\end{figure}

The admissible region for points $X$ in Theorem~\ref{th:X-exparabola} is bounded
by the lines of the anticomplementary triangle, c.f.
Figure~\ref{fig:anticomplementary}. The case of $X$ being the triangle's
centroid yields a new characterization of the max-exparabolas:

\begin{corollary}
  \label{cor:centroid}
  For an exparabola $p$ of a triangle, the following two statements are
  equivalent:
  \begin{itemize}
  \item $p$ is a max-exparabola.
  \item The axis of $p$ contains the triangle's centroid $G$.
  \end{itemize}
  In particular, the axes of the three max-exparabolas intersect in the
  triangle's centroid $G$ (Figure~\ref{fig:anticomplementary}).
\end{corollary}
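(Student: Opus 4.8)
The plan is to show that the two distinguished families of exparabolas---the max-exparabolas and the exparabolas with axis through the centroid---are governed by one and the same cubic equation in the parameter $t$, so that they have identical solution sets. By Theorem~\ref{th:max-exparabolas}, the max-exparabolas are precisely those whose parameter $t$ is a root of the cubic $e_c = 2e$ appearing in \eqref{eq:5}. By Theorem~\ref{th:X-exparabola}, the exparabolas whose axis passes through a point $X=(x_0,x_1,x_2)$ are precisely those whose parameter is a root of the cubic $f$ in \eqref{eq:8}. It therefore suffices to substitute the barycentric coordinates of the centroid into $f$ and to compare the result with $e_c$.

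First I would record that the centroid $G$ has homogeneous barycentric coordinates $(1,1,1)$. Since the cubic $f$ in \eqref{eq:8} is homogeneous of degree one in $(x_0,x_1,x_2)$, replacing the normalized coordinates $(\tfrac13,\tfrac13,\tfrac13)$ by $(1,1,1)$ merely multiplies $f$ by the nonzero constant $3$ and hence leaves its roots unchanged. Setting $x_0=x_1=x_2=1$ in \eqref{eq:8} then yields
\[
  f = 2c^2t^3 + (a^2-b^2-3c^2)t^2 - (3a^2+b^2-c^2)t + 2a^2,
\]
which is exactly the cubic $e_c$ of \eqref{eq:5}. Consequently $e_c$ and the centroid's instance of $f$ share the same three roots $t_0$, $t_1$, $t_2$, the two families of exparabolas coincide, and both implications of the asserted equivalence follow simultaneously.

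Finally, one must confirm that Theorem~\ref{th:X-exparabola} genuinely applies to $X=G$, i.e.\ that the centroid lies in the admissible region. For $(x_0,x_1,x_2)=(\tfrac13,\tfrac13,\tfrac13)$ each pairwise sum $x_i+x_j$ equals $\tfrac23>0$, so every product in the admissibility conditions \eqref{eq:7} equals $\tfrac49>0$. Hence there are indeed precisely three real exparabolas with axes through $G$, one opposite to each vertex, and they are exactly the three max-exparabolas; in particular their axes concur at $G$, which gives the final clause of the corollary.

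I expect no serious obstacle: the whole argument reduces to the observation that the two independently derived cubics become literally equal after the substitution, supplemented by the bookkeeping remarks on homogeneity and admissibility. The only point demanding a little care is to carry out the substitution with the correct (homogeneous versus normalized) coordinate convention, so that the coefficient-by-coefficient match with \eqref{eq:5} is exact rather than merely proportional.
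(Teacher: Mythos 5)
Your proposal is correct and follows essentially the same route as the paper, whose entire proof is the one-line observation that for $x_0=x_1=x_2=1$ the cubics \eqref{eq:4} and \eqref{eq:8} have the same roots. You merely spell out the supporting bookkeeping (homogeneity of $f$ in the barycentric coordinates and the verification of the admissibility conditions \eqref{eq:7} for the centroid) that the paper leaves implicit.
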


\begin{proof}
  For $x_0 = x_1 = x_2 = 1$, the cubics \eqref{eq:4} and \eqref{eq:8} have the same roots.
\end{proof}

Three exparabolas with concurrent axes have give rise to a focal triangle and a theorem related to it.

\begin{definition}
Given three different parabolas, we define their \emph{focal triangle} as the triangle formed by their respective foci. If the three parabolas are exparabolas of a triangle $ABC$ we speak of \emph{a focal triangle of $ABC$.} Finally, if $X$ is an arbitrary point and the three exparabolas have axes incident with $X$, we call the focal triangle the \emph{focal triangle of $ABC$ with respect to $X$} or the \emph{$X$-focal triangle} of $ABC$ for short.
\end{definition}

The focal triangle is always well-defined unless two of the three exparabolas coincide:
\begin{itemize}
\item As an immediate consequence of Lemma~\ref{lem:exparabola-basic}, the circumcircle of $ABC$ is also a circumcircle of the focal triangle. In particular, the three focal points are never collinear.
\item Even if the three exparabolas with axes through $X$ are not real (the barycentric coordinates of $X$ with respect to $ABC$ do not satisfy \eqref{eq:7}), their foci are.
\end{itemize}

\begin{theorem}
  \label{th:focal-orthocenter}
  For any triangle $ABC$ and any point $X$, the orthocenter of the $X$-focal triangle is the point $X$ itself.
\end{theorem}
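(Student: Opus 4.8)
The plan is to reduce the orthocenter statement to a single identity about the sum of the three foci. By Lemma~\ref{lem:exparabola-basic} the three foci $F_0$, $F_1$, $F_2$ lie on the circumcircle of $ABC$, so the focal triangle is inscribed in that circle; let $O$ be its center. Taking $O$ as the origin, the orthocenter of any triangle inscribed in a circle centered at the origin is the vector sum of its vertices. Hence the theorem is equivalent to the single identity $F_0 + F_1 + F_2 = X$, and the whole task becomes: \emph{show that the three foci sum to $X$}.

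To exploit the fact that all foci lie on one circle, I would switch to complex coordinates with the circumcircle as the unit circle, $O = 0$, and vertices $A$, $B$, $C$ unit complex numbers. A focus is then a point $P$ with $|P| = 1$, and I must express the axis direction of the corresponding exparabola as a function of $P$. The axis is orthogonal to the directrix, and by the tangent-reflection property of the parabola together with tangency to all three sides, the directrix is the line through the reflections of $P$ in the three sides --- the \emph{Steiner line} of $P$, which is parallel to its \emph{Simson line}. Computing two feet of perpendiculars and subtracting would give the Simson-line direction $(B-A)(P-C)/P$ up to a real factor, so the axis direction is $i(B-A)(P-C)/P$.

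The incidence condition that $X$ lie on the axis then reads: the quotient of $X - P$ by the axis direction is real. Writing $\overline{P} = 1/P$, $\overline{A} = 1/A$, $\overline{B} = 1/B$, $\overline{C} = 1/C$ and clearing denominators, I expect this reality condition to collapse to the cubic
\[
  P^3 - X\,P^2 - ABC\,\overline{X}\,P + ABC = 0,
\]
whose three roots are exactly the three foci $P_0$, $P_1$, $P_2$. Vieta's formula for the $P^2$-coefficient would then give $P_0 + P_1 + P_2 = X$, which is precisely the sum identity above, so $X$ is the orthocenter of the focal triangle. As a consistency check, the product of the roots is $-ABC$, of modulus one as it must be, and conjugating $\sum P_i = X$ reproduces the coefficient of $P$.

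The main obstacle will be the middle step: pinning down the axis direction as a function of the focus. This needs the classical identification of the directrix with the Steiner line, followed by a careful Simson-line computation, after which one must verify that the reality condition really reduces to a cubic with leading coefficient $1$ and $P^2$-coefficient $-X$ --- only then does Vieta deliver $X$ exactly, rather than some multiple or shift. A fully coordinate-based alternative is available in the paper's Cartesian frame, combining the appendix focus formula with Vieta applied to the cubic~\eqref{eq:8}; it sidesteps the Simson-line lemma but requires summing the rational expressions for the foci over the three roots of~\eqref{eq:8}, which is computationally heavier than the complex-variable route sketched here.
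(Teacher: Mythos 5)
Your proposal is correct, and it takes a genuinely different route from the paper's. The paper stays in Cartesian coordinates: it computes the three foci and axis directions from the appendix formulas, observes that the three conditions ``the axis of $p_A$ is the altitude of the focal triangle through $F_A$'' (and cyclically) all reduce to one and the same polynomial condition $h(u,v,w)=0$, symmetric enough to be written in the elementary symmetric functions of $u,v,w$, and then verifies $h=0$ by substituting the Vieta expressions for the roots of the cubic \eqref{eq:8}; since the axes concur at $X$ by construction, $X$ is the orthocenter. You instead reduce the theorem to the single identity $F_0+F_1+F_2=X$ (circumcenter at the origin), which is legitimate by Lemma~\ref{lem:exparabola-basic}, and derive a cubic directly in the focus $P$ on the unit circle using the classical facts that the directrix of an inscribed parabola is the Steiner line of its focus and that this line is parallel to the Simson line. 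Your computations check out: with Simson direction $(B-A)(P-C)/P$ and axis direction $d=i(B-A)(P-C)/P$ one gets $d/\overline{d}=-ABC/P$, and the reality condition $(X-P)/d\in\mathbb{R}$ clears to the monic, self-inversive cubic $P^3-XP^2-ABC\,\overline{X}P+ABC=0$, so Vieta gives $\sum P_i=X$ exactly. What each approach buys: the paper's heavier elimination also establishes en route that the exparabola axes are the altitudes of the focal triangle (reused in Corollary~\ref{cor:focal-orthocenter}), while your argument is shorter and more conceptual --- and the altitude statement falls out afterwards anyway, since each axis joins the vertex $F_i$ to the orthocenter $X$. The one point you should make explicit is that the three roots of your cubic are precisely the three foci in question: the focus map is a bijection between exparabolas and circumcircle points (minus the vertices), so the unit-circle solutions of your reality condition account for all exparabolas with axes through $X$, and their number matches the degree of the (always monic) cubic.
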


\begin{proof}
  We consider three exparabolas $p_A$, $p_B$, $p_C$ given by tangent points with respective homogeneous barycentric coordinates
  \begin{alignat*}{3}
    A_0 &= (0, 1, v-1),\quad&
    A_1 &= (0, 1, w-1),\quad&
    A_2 &= (0, 1, u-1),\\
    B_0 &= (1, 0, -w),\quad&
    B_1 &= (1, 0, -u),\quad&
    B_2 &= (1, 0, -v),\\
    C_0 &= (1-u,u,0),\quad&
    C_1 &= (1-v,v,0),\quad&
    C_2 &= (1-w,w,0)
  \end{alignat*}
  for pairwise different $u$, $v$, $w \in \mathbb{R} \setminus \{0,1\}$.
  Using once more the simple coordinates
  \[
    A = (0, 0),\quad
    B = (c, 0),\quad
    C = (c_1, c_2)
  \]
  and Equation~\eqref{eq:focus} from the appendix, we obtain the focal point
  \[
    F_A = \frac{cw}{c^2(1-w)^2-2cc_1(1-w)+c_1^2+c_1^2}
    \begin{pmatrix}
      c_1^2+c_2^2-cc_1(1-w)\\
      -cc_2(1-w)
    \end{pmatrix}
  \]
  and, via \eqref{eq:axis_direction}, the
  corresponding axis direction
  \[
    \axisDir_A = \begin{pmatrix} c(w-1)+c_1 \\ c_2 \end{pmatrix}.
  \]
  The formulas for the focal points $F_B$, $F_C$ and for their axis directions $\axisDir_B$, $\axisDir_C$ are obtained by replacing $w$ by $u$ and $t$ respectively. The conditions for perpendicularity of one exparabola axis to the connecting line of the other two focal points read
  \begin{equation*}
    \begin{aligned}
      h \coloneqq &\det\begin{pmatrix}\axisDir_A,F_B-F_C\end{pmatrix} = 0,\\
                  &\det\begin{pmatrix}\axisDir_B,F_C-F_A\end{pmatrix} = 0,\\
                  &\det\begin{pmatrix}\axisDir_C,F_A-F_B\end{pmatrix} = 0.
    \end{aligned}
  \end{equation*}
  It turns out, that all three conditions are the same. In other words, if $h = 0$, then not only the axis of $p_A$ is perpendicular to the line $F_BF_C$ but also the axes of $p_B$ and $p_C$ are perpendicular to the lines $F_CF_A$ and $F_AF_B$, respectively. We therefore focus on the condition $h = 0$. Substituting \eqref{eq:3}, it becomes
  \begin{multline}
    \label{eq:9}
    h = c^2((a^2+b^2)c^2-(a-b)^2(a+b)^2)uvw
    + a^2c^2(a^2-b^2-c^2)(uv+vw+wu) \\
    - a^2c^2(a^2+b^2-c^2)(u+v+w)
    + a^2((b^2+c^2)a^2-(b-c)^2(b+c)^2).
  \end{multline}
  Assume now that $u$, $v$ and $w$ are the roots of \eqref{eq:8}. Then
  \begin{equation*}
    \begin{aligned}
    uvw &= -\frac{a^2(x_1+x_2)}{c^2(x_0+x_1)},\\
    uv+vw+wu &= -\frac{a^2(x_1+2x_2)+(b^2-c^2)x_0}{c^2(x_0+x_1)},\\
    u+v+w &= \phantom{-}\frac{c^2(2x_0+x_1)+(b^2-a^2)x_2}{c^2(x_0+x_1)}.
    \end{aligned}
  \end{equation*}
  Plugging these into the right-hand side of \eqref{eq:9} gives $0$ so that $h =
  0$ is indeed fulfilled.
\end{proof}

\section{The Focal Triangle of the Max-exparabolas}
\label{sec:focal-triangle}

We now turn to the focal triangle of the max-exparabolas. Corollary~\ref{cor:centroid} together with Theorem~\ref{th:focal-orthocenter} yield:

\begin{figure}
  \centering
  \includegraphics[]{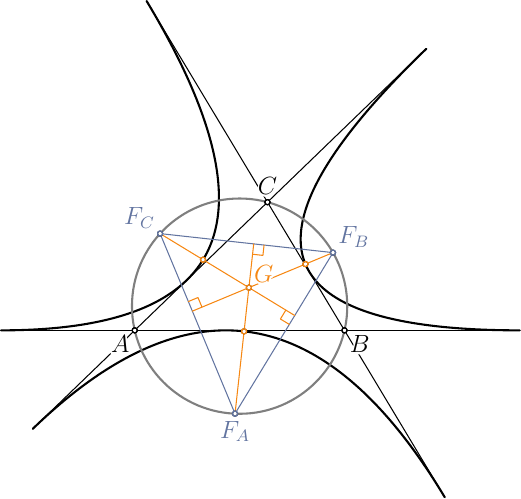}
  \caption{The altitudes of the focal triangle $F_AF_BF_C$ are the axes of the
    max-exparabolas.}
  \label{fig:focaltriangle}
\end{figure}

\begin{corollary}
  \label{cor:focal-orthocenter}
  For any triangle $ABC$ the following hold true:
  \begin{itemize}
  \item The orthocenter of the focal $F_AF_BF_C$ triangle with respect to the max-exparabolas is the centroid $G$ of $ABC$.
  \item The axes of the max-exparabolas of $ABC$ are the altitudes of $F_AF_BF_C$ and intersect in~$G$.
  \item The triangles $ABC$ and $F_AF_BF_C$ have the same Euler line.
  \end{itemize}
\end{corollary}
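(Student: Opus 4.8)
The plan is to assemble this corollary from the three results just established --- Corollary~\ref{cor:centroid}, Theorem~\ref{th:focal-orthocenter}, and Lemma~\ref{lem:exparabola-basic} --- with essentially no new computation.

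For the first item I would observe that, by Corollary~\ref{cor:centroid}, the max-exparabolas are exactly the exparabolas whose axes pass through the centroid $G$. Hence their focal triangle $F_AF_BF_C$ is precisely the $G$-focal triangle. Before invoking Theorem~\ref{th:focal-orthocenter} I would check that this triangle is genuinely non-degenerate: the three parameter values $t_0<t_1<t_2$ from Theorem~\ref{th:max-exparabolas} are distinct, the centroid (with homogeneous coordinates $(1,1,1)$) clearly satisfies the admissibility inequalities \eqref{eq:7}, and by Lemma~\ref{lem:exparabola-basic} the three foci lie on the circumcircle of $ABC$ and are never collinear. Theorem~\ref{th:focal-orthocenter} with $X=G$ then gives immediately that the orthocenter of $F_AF_BF_C$ equals $G$.

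For the second item I would avoid re-deriving any perpendicularity relation and argue purely by incidence. The axis of any parabola passes through its focus, so the axis of the max-exparabola opposite $A$ passes through $F_A$; by Corollary~\ref{cor:centroid} it also passes through $G$. Since $G$ is the orthocenter of $F_AF_BF_C$ by the first item, and the altitude of a triangle from a vertex is the unique line joining that vertex to the orthocenter, the axis through $F_A$ and $G$ must be the altitude from $F_A$; the same holds at $F_B$ and $F_C$. Here I would note that $G$ lies strictly inside $ABC$ whereas the foci lie on the circumcircle, so $G$ differs from each vertex $F_A$, $F_B$, $F_C$ and the ``line through vertex and orthocenter'' characterization of the altitude is valid. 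The three axes are therefore the altitudes, and their concurrency at $G$ is just a restatement of Corollary~\ref{cor:centroid}.

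For the third item I would use that both triangles share the circumcircle by Lemma~\ref{lem:exparabola-basic}, hence the same circumcenter $O$. The Euler line of $ABC$ is the line $OG$ through its circumcenter and centroid, while the Euler line of $F_AF_BF_C$ is the line through its circumcenter $O$ and its orthocenter, which is again $G$; both equal the line $OG$ and therefore coincide. I do not expect any serious obstacle, as the whole argument is bookkeeping; the single point meriting a word of care is that the common line $OG$ is pinned down by two points playing different center-roles ($G$ as the centroid of $ABC$ but as the orthocenter of the focal triangle), together with the harmless equilateral degenerate case $O=G$, in which the Euler line is undefined but both triangles are equilateral so nothing is lost.
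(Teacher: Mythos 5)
Your proposal is correct and follows the same route as the paper, which states the corollary as an immediate consequence of Corollary~\ref{cor:centroid} and Theorem~\ref{th:focal-orthocenter} without writing out a proof. Your write-up simply makes explicit the routine details (non-degeneracy, the axis passing through the focus, the shared circumcenter for the Euler line), all of which are handled correctly.
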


\begin{remark*}
The result for the orthocenter of the focal triangle is analogous to the result in triangle geometry for the excentral triangle. The orthocenter of the excentral triangle is the incenter $I$ of the triangle, see \cite{luka20} where some inequalities for the altitudes of the excentral triangle are given.
\end{remark*}

We now use Corollary~\ref{cor:focal-orthocenter} to investigate the limiting behavior of a sequence of iterated focal triangles with respect the centroid.

\begin{corollary}
  \label{cor:focal-sequence}
  Given a triangle $A_0B_0C_0$, denote for $i \ge 0$ the centroid of $A_iB_iC_i$ by $G_i$ and the $G_i$-focal triangle of $A_iB_iC_i$ by $A_{i+1}B_{i+1}C_{i+1}$. Then the limits
  of the sequences
  \[
    A_{2i}B_{2i}C_{2i}
    \quad\text{and}\quad
    A_{2i+1}B_{2i+1}C_{2i+1}
  \]
  are both equilateral triangles that together form the vertices of a regular
  hexagon (Figure~\ref{fig:focal-sequence}).
\end{corollary}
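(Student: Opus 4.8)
The plan is to pass to complex coordinates on the common circumcircle and reduce the whole iteration to an explicit, \emph{linear} map on the elementary symmetric functions of the vertices. By Lemma~\ref{lem:exparabola-basic} every focal triangle is inscribed in the circumcircle of its parent, so all triangles $A_iB_iC_i$ share one circle. I would place it as the unit circle centered at the origin $O$ and encode $A_iB_iC_i$ by its vertices $a,b,c$ on the unit circle, equivalently by the symmetric functions $e_1=a+b+c$, $e_2=ab+bc+ca$, $e_3=abc$ (note $|e_3|=1$). Recall that for a triangle inscribed in the unit circle the orthocenter is $a+b+c=e_1$ and the centroid is $e_1/3$.

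The heart of the matter is a closed form for the map $T\mapsto T'$ sending a triangle to its $G$-focal triangle. I claim the three foci are exactly the roots $w$ of
\[ 3w^3 - e_1 w^2 - e_2 w + 3e_3 = 0, \]
so that $e_1'=e_1/3$, $e_2'=-e_2/3$, $e_3'=-e_3$. The relation $e_1'=e_1/3$ is essentially free: by Corollary~\ref{cor:focal-orthocenter} the orthocenter of $T'$ is the centroid of $T$, i.e. $e_1'=e_1/3$. For the remaining two relations I would compute the foci directly, either from the appendix formula \eqref{eq:focus} combined with the max-exparabola cubic \eqref{eq:4}, or, more cheaply, from two classical facts about inscribed parabolas: the focus lies on the circumcircle (Lemma~\ref{lem:exparabola-basic}), and the directrix is the Steiner line of the focus and hence passes through the orthocenter. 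The Steiner line of a point $e^{i\theta}$ has direction $\tfrac12(\arg e_3-\theta)$, so the requirement that the axis---perpendicular to this line and through the focus---pass through the centroid $e_1/3$ becomes, after writing the focus as $q^2$ and clearing denominators, exactly the displayed cubic. As a sanity check on the equilateral triangle one has $e_1=e_2=0$, the equation reads $w^3=-e_3$, and the foci are the original vertices rotated by $60^\circ$, in agreement with Corollary~\ref{cor:centroid}.

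Granting this map, the conclusion is immediate. Iterating gives $e_1^{(i)}=3^{-i}e_1^{(0)}\to 0$, $e_2^{(i)}=(-3)^{-i}e_2^{(0)}\to 0$, and $e_3^{(i)}=(-1)^i e_3^{(0)}$. Since the vertices of $A_iB_iC_i$ are the roots of $w^3-e_1^{(i)}w^2+e_2^{(i)}w-e_3^{(i)}$ and the roots of a monic polynomial depend continuously on its coefficients, the even subsequence converges to the roots of $w^3=e_3^{(0)}$ and the odd subsequence to the roots of $w^3=-e_3^{(0)}$; both are equilateral triangles inscribed in the circle. Writing $e_3^{(0)}=e^{i\varphi}$, the six limit vertices occupy the angles $\varphi/3+k\cdot 60^\circ$ for $k=0,\dots,5$, i.e. they form a regular hexagon.

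The main obstacle is establishing the focus cubic, specifically the transformations $e_2\mapsto -e_2/3$ and $e_3\mapsto -e_3$; the orthocenter property only hands me $e_1\mapsto e_1/3$. The direct expansion via \eqref{eq:focus} is bookkeeping-heavy, so I would favor the Simson-line derivation above, whose only nontrivial input is the direction of the Steiner line. Once the cubic is in hand, the dynamical statement is a one-line consequence, and the $e_3\mapsto -e_3$ sign is precisely what produces the two distinct limits and the even/odd split.
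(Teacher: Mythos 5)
Your proof is correct, but it takes a genuinely different route from the paper's. The paper argues synthetically on the Euler line: all triangles share the circumcenter $O$, the relation $\vv{OG}=\tfrac13\vv{OH}$ holds in every triangle, and $H_{i+1}=G_i$ by Theorem~\ref{th:focal-orthocenter}, so the centroids contract toward $O$ with factor $\tfrac13$ and the limits must be equilateral; the hexagon is then read off from the focal triangle of an equilateral triangle. You replace this with an explicit linear dynamical system on the elementary symmetric functions of the vertices on the unit circle, and your cubic does check out: with $\bar p=1/p$, $\bar g=e_2/(3e_3)$, and the classical fact that the directrix of an inscribed parabola is the Steiner line of its focus (so the Simson direction satisfies $d/\bar d=e_3/p$ and the axis direction satisfies $d/\bar d=-e_3/p$), the incidence condition $p(g-p)+e_3(\bar g-\bar p)=0$ is exactly $3p^3-e_1p^2-e_2p+3e_3=0$; since the three real foci guaranteed by Theorem~\ref{th:max-exparabolas} and Corollary~\ref{cor:centroid} all satisfy this cubic and it has only three roots, its roots are precisely the foci, giving $e_1\mapsto e_1/3$, $e_2\mapsto -e_2/3$, $e_3\mapsto -e_3$. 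What your approach buys is worth stating: the paper's argument only controls the centroids, and a sequence of inscribed triangles whose centroids tend to the center could in principle rotate while becoming equilateral, so convergence of the vertex sequences needs an extra word there; your invariant $e_3^{(i)}=(-1)^ie_3^{(0)}$ pins down the orientation and yields honest convergence of the even and odd subsequences by continuity of the roots of $w^3-e_1^{(i)}w^2+e_2^{(i)}w-e_3^{(i)}$, with the hexagon immediate from $w^3=\pm e_3^{(0)}$. The one ingredient you must still write out in full is the directrix--Steiner-line fact (or, alternatively, a direct computation of the foci via \eqref{eq:focus}); it is classical but is not proved in the paper, and your argument stands or falls with it.
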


\begin{proof}
  Denote by $(O_i,G_i,H_i)$ the triple consisting of circumcenter, centroid and orthocenter of $A_iB_iC_i$. Then $O_i = O_0$ is constant and all triangles have the same circumcircle. Moreover, $G_{i+1}$ and $H_{i+1}$ are obtained from $G_i$ and $H_i$, respectively, by a scaling from $O_0$ with scale factor $\frac{1}{3}$ because
  \begin{itemize}
  \item the centroid divides the segment between circumcenter and orthocenter with ratio $1:2$ in every triangle \cite[Thereom~201]{Altshiller-Court07} and
  \item $H_{i+1} = G_i$ (Theorem~\ref{th:focal-orthocenter}).
  \end{itemize}
  Therefore, we obtain a sequence of triangles inscribed in the circumcircle of $A_0B_0C_0$ whose centroids converge to the circumcircle center. However, a triangle whose circumcenter and centroid coincide is equilateral. Now the claim follows from the observation that the vertices of an equilateral triangle and its focal triangle with respect to the max-exparabolas form the even and odd vertices of a regular hexagon.
\end{proof}

\begin{figure}
  \centering
  \includegraphics[page=1]{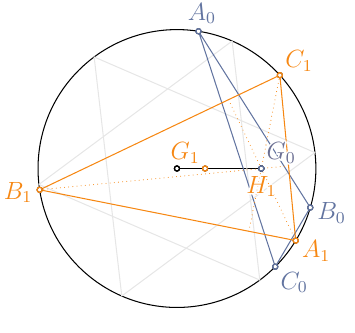}
  \includegraphics[page=2]{./img/focal-sequence}\\
  \includegraphics[page=3]{./img/focal-sequence}
  \includegraphics[page=4]{./img/focal-sequence}
  \caption{Iterated sequence of focal triangles. In each step, the original
    triangle is in blue and the focal triangle is in orange, the limiting
    equilateral triangles are in gray. Centroids are denoted by $G_i$, orthocenters
    by $H_i$.}
  \label{fig:focal-sequence}
\end{figure}

Figure~\ref{fig:focal-sequence} shows the first triangles in a sequence of focal triangles with respect to max-exparabolas. The convergence stated in Corollary~\ref{cor:focal-sequence} can clearly be observed.

\section{Conclusion}
\label{sec:conclusion}

In this article we introduced exparabolas and focal triangles associated to triples of exparabolas to triangle geometry. In particular we studied max-exparabolas and their focal triangles. Max-exparabolas can also be characterized as exparabolas whose axes contain the triangle's centroid $G$. Iteratively constructing focal triangles with respect to max-exparabolas yields a remarkable sequence of triangles that becomes 2-periodic in the limit.

It is to be expected that further interesting triangle properties are related to exparabolas in general and to special exparabolas. Besides the focal triangle one can also study the triangle formed by three exparabola axis. Furthermore, iterated sequences can not just be based on the triangle centroids but also on other points. We have first numerical evidence that the 2-periodic limit is rather typical but the limiting shapes differ.

\appendix
\section{Formulas for Bézier Parabolas}

In this appendix we collect some technical calculations and formulas for parametric parabolas
given as quadratic Bézier curves
\begin{equation}
  \label{eq:10}
  p\colon P(u) =
  (1-u)^2P_0 + 2(1-u)uP_1 + u^2P_2
\end{equation}
with control points $P_0 = (x_0,y_0)$, $P_1 = (x_1,y_1)$, $P_2 = (x_2,y_2)$ (cf.~\cite[Section~4.1]{farin97}. The formulas are frequently referred to at several places in this text.

The parabola $p$ has a unique point at infinity. It is obtained from \eqref{eq:10} in the limit for $u \to \pm\infty$. Therefore, the \emph{axis direction $\axisDir$} of $p$ is the leading coefficient of $P(u)$:
\begin{equation}
  \label{eq:axis_direction}
  \axisDir = P_0 - 2P_1 + P_2.
\end{equation}

In order to obtain the \emph{vertex $V$} of $p$, we compute
\[
  \dot{P}(u) = 2(P_0 - 2P_1 + P_2)u +
  2(P_1 - P_0)
\]
and solve $\langle \axisDir, \dot{P}(u) \rangle$ for $u$. This gives the \emph{parameter value $u_V$} of the vertex $V$:
\begin{equation}
  \label{eq:vertex_value}
  u_V = -\frac{\langle \axisDir, P_1-P_0 \rangle}{\Vert \axisDir \Vert^2}.
\end{equation}
The vertex itself is obtained as $V = P(u_V)$.

The \emph{parameter} $\varrho$ of $p$ equals the curvature radius in $V$, c.f. \cite[Theorem~2.1.5]{UniverseConics}. The squared curvature $\varkappa^2$ of the parabola is computed by the formula
\[
  \varkappa^2 =
  \frac{\det(\dot{P}(u), \ddot{P}(u))^2}{\Vert \dot{P}(u) \Vert^3}.
\]
Evaluating its reciprocal at $u = u_V$ yields, after a lengthy calculation, the formula
\begin{equation}
  \label{eq:parameter2}
  \varrho^2 = \varkappa^{-2} =
  \frac{4(x_0y_1-x_0y_2-x_1y_0+x_1y_2+x_2y_0-x_2y_1)^4}{N^3}
\end{equation}
with
\[
  N = (x_0+x_2)^2 +(y_0+y_2)^2 - 4x_1(x_0-x_1+x_2) - 4y_1(y_0-y_1+y_2)
\]
for the \emph{squared parameter}~$\varrho^2$.

Finally, we compute the focus of the parabola \eqref{eq:10}. In our context, it is conveniently described as the intersection point of the two isotropic tangents of $p$ (c.f. \cite[p.~288]{UniverseConics}). These are the tangents with directions $(1,\pm\mathrm{i})$. They correspond to parabola points of respective parameter values 
\begin{equation*}
  u_{\pm} = \frac{y_0-y_1 \mp (x_0-x_1)\mathrm{i}}{y_0-2y_1+y_2 \mp (x_0-2x_1+x_2)\mathrm{i}}.
\end{equation*}
The intersection point of the two isotropic tangents is the focal point
\begin{equation}
  \label{eq:focus}
  F = \frac{1}{(x_0-2x_1+x_2)^2+(y_0-2y_1+y_2)^2}(f_1,f_2)
\end{equation}
where
\begin{multline*}
  f_1 =
  x_0^2x_2-x_0x_1^2-2x_0x_1x_2+x_0x_2^2+2x_1^3-x_1^2x_2-x_1y_0^2\\
  -2x_1y_0y_1+4x_1y_0y_2+2x_1y_1^2-2x_1y_1y_2-x_1y_2^2
  \\
  +x_0(y_1-y_2)^2
  +x_1(y_0-y_2)^2
  +x_2(y_0-y_1)^2
\end{multline*}
and
\begin{multline*}
  f_2 =
  -x_0^2y_1-2x_0x_1y_1+4x_0x_2y_1+2x_1^2y_1-2x_1x_2y_1-x_2^2y_1\\
  +y_0^2y_2-y_0y_1^2-2y_0y_1y_2+y_0y_2^2+2y_1^3-y_1^2y_2
  \\
  +y_0(x_1-x_2)^2
  +y_1(x_0-x_2)^2
  +y_2(x_0-x_1)^2.
\end{multline*}

\section*{Acknowledgment}

This research is funded by the Ministry of Education and Science of Republic of North Macedonia: Bilateral Austrian-Macedonian scientific Project No.~20-8436/20 with the title ``Estimates for Ellipsoids in Classical and Non-Euclidean Geometries'' and by OeAD-GmbH Austria's Agency for Education and Internationalization, Project No.~MK05/2024, Project Title ``Estimates for Ellipsoids in Classical and Non-Euclidean Geometries''.

\bibliographystyle{elsarticle-num}

\end{document}